\newtheorem{thm}{Theorem}
\newtheorem{lem}{Lemma}
\newtheorem*{cor*}{Corollary}
\theoremstyle{definition}
\theoremstyle{remark}
\newtheorem{rem}{Remark}
\newtheorem{obv}{Observation}
\newcommand{\tpitchfork}{%
  \vbox{
    \baselineskip\z@skip
    \lineskip-.52ex
    \lineskiplimit\maxdimen
    \m@th
    \ialign{##\crcr\hidewidth\smash{$-$}\hidewidth\crcr$\pitchfork$\crcr}
  }%
}
\begin{document}
\title{On a potential contact analogue of Kirby move of type 1}

\author{Prerak Deep}
\address{Department of Mathematics, Indian Institute of Science Education and Research Bhopal}
\curraddr{}
\email{prerakd@iiserb.ac.in}
\thanks{}

\author{Dheeraj Kulkarni}
\address{Department of Mathematics,  Indian Institute of Science Education and Research Bhopal}
\curraddr{}
\email{dheeraj@iiserb.ac.in}
\thanks{}

\subjclass[2020]{57K33, 53D10, 57R65}

\keywords{Legendrian knot, Contact surgery, Kirby moves, Contact 3-manifold}

\date{\today}

\dedicatory{}

\begin{abstract}
    In this expository note, we explore the possibility of the existence of Kirby move of type 1 for contact surgery diagrams. 
    In particular, we give the necessary conditions on a contact surgery diagram to become a potential candidate for contact Kirby move of type 1. 
    We observe that there is a collection of contact positive integral surgery diagrams on Legendrian unknots satisfying those conditions.   
\end{abstract}

\maketitle

\section{Introduction}
In the 1960s, Lickorish and Wallace showed that any closed connected oriented 3-manifold admits a surgery presentation in terms of a surgery diagram in $\mathbb{S}^3$. In particular, they proved the following result independently in \cite{L} and \cite{W}.
\begin{thm}[Lickorish--Wallace]\label{LWThm}
        Any closed oriented connected smooth 3-manifold can be obtained by performing a sequence of $(\pm1)$-surgeries on a link in $\mathbb{S}^3$.
\end{thm}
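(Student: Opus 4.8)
The plan is to recast the problem in terms of Heegaard splittings and the mapping class group of a surface, and then to realize each generator of that group by a single $\pm 1$ surgery; the two signs will correspond to the two directions of a Dehn twist.

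I would begin with the existence of a Heegaard splitting: every closed oriented connected 3-manifold $M$ can be written as $M = H_g \cup_\phi H_g'$, where $H_g, H_g'$ are handlebodies of some genus $g$ and $\phi \colon \dd H_g \to \dd H_g'$ is an orientation-reversing homeomorphism of the common boundary surface $\Sigma_g$. The diffeomorphism type of $M$ depends only on the isotopy class of $\phi$, hence only on its image in the mapping class group $\mathrm{MCG}(\Sigma_g)$, and a standard reference gluing recovers $M = \mathbb{S}^3$. This reduces the theorem to understanding how $M$ changes as the mapping class of $\phi$ is modified.

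Next I would invoke Lickorish's twist theorem, which asserts that $\mathrm{MCG}(\Sigma_g)$ is generated by Dehn twists along a fixed finite family of simple closed curves on $\Sigma_g$. Hence $\phi$ is isotopic to a product $\tau_{\gamma_1}^{\epsilon_1} \circ \cdots \circ \tau_{\gamma_n}^{\epsilon_n}$ of Dehn twists with each $\epsilon_j = \pm 1$. The decisive geometric input is that modifying the gluing by one Dehn twist $\tau_\gamma^{\pm 1}$ has the same effect on $M$ as Dehn surgery on the curve $\gamma$, pushed slightly off $\Sigma_g$ into one of the handlebodies, with surgery slope $\pm 1$ relative to the framing that $\Sigma_g$ induces on $\gamma$. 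Building up $\phi$ from the reference gluing one twist at a time, and placing the successive curves on parallel copies $\Sigma_g \times \{t_j\}$ of the Heegaard surface inside a collar, I would arrange the curves to be pairwise disjoint in $\mathbb{S}^3$, so that they form a link $L \subset \mathbb{S}^3$ on which a sequence of $\pm 1$ surgeries yields $M$.

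The step I expect to be the main obstacle is controlling the framings. A Dehn twist is naturally a $\pm 1$ surgery with respect to the \emph{surface} framing coming from $\Sigma_g$, whereas the theorem requires the coefficient $\pm 1$ measured against the Seifert (zero) framing in $\mathbb{S}^3$; the two differ by the self-linking number of the curve. I would reconcile them by choosing the generating curves so that their surface framings already agree with the zero framing, or else by absorbing the discrepancy into auxiliary $\pm 1$-framed unknots introduced by blow-ups and then cleaned up with Kirby moves. Verifying that this bookkeeping can always be completed \emph{without ever producing a surgery coefficient other than} $\pm 1$ is the delicate heart of the argument.
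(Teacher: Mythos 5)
The paper does not prove this theorem at all; it is imported from the literature, citing \cite{L} and \cite{W}, and Lickorish's proof in \cite{L} is precisely the route you outline (Heegaard splitting, the twist theorem for the mapping class group, and realization of each Dehn twist as a surgery on a curve pushed into a collar of the Heegaard surface). So your proposal is the standard argument, and it is sound. The one step you leave open --- the framing bookkeeping --- is resolved exactly by the first of your two suggestions, and this is what Lickorish does: one uses the $3g-1$ Lickorish generating curves on the \emph{standardly embedded} genus-$g$ Heegaard surface of $\mathbb{S}^3$; each of these curves is unknotted and its surface framing coincides with its Seifert (zero) framing, and this persists for the parallel copies placed at the levels $\Sigma_g \times \{t_j\}$ of the collar, so every coefficient produced is honestly $\pm1$ in the sense of the theorem. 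Your fallback via blow-ups and handle slides is not needed, though it is also legitimate (it does not presuppose Kirby's theorem, only the elementary facts that a handle slide and the addition of a disjoint $\pm1$-framed unknot preserve the resulting $3$-manifold). For contrast, Wallace's proof in \cite{W} is genuinely different: it runs through $4$-dimensional cobordism theory (every closed orientable $3$-manifold bounds a $4$-manifold, whose handle decomposition yields the surgery presentation), at the cost of less direct control over the coefficients.
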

 In 2004, Ding and Geiges proved an analogue result for contact 3-manifolds. More precisely, they proved the following statement in \cite{DG}. 
        \begin{thm}
            Any closed oriented connected contact 3-manifold can be obtained by a sequence of contact $(\pm1)$-surgeries on some Legendrian link in $(\mathbb{S}^3, \xi_{st})$.
        \end{thm}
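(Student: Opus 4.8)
The plan is to split the problem into two independent pieces: first, to produce a \emph{rational} contact surgery description of $(M,\xi)$ on a Legendrian link in $(\mathbb{S}^3,\xi_{st})$; and second, to convert each rational coefficient into a string of $(\pm 1)$-coefficients. For the second piece I would prove the two standard algebraic lemmas of contact surgery: a \emph{cancellation lemma}, that a contact $(+1)$- and a contact $(-1)$-surgery on a Legendrian knot and its Legendrian push-off undo one another; and a \emph{continued fraction lemma}, that a contact $r$-surgery with $r=p/q\neq 0$ can be replaced by contact $(\pm1)$-surgeries on a chain of iterated stabilized push-offs, the number and signs being read off from the continued fraction expansion of $r$. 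These are local computations in a standard neighborhood of the Legendrian knot, so I expect no conceptual difficulty there.

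The heart of the matter is the first piece, and here I would route through open book decompositions rather than through Theorem~\ref{LWThm} directly. By the Giroux correspondence, $(M,\xi)$ is supported by an open book $(\Sigma,\phi)$; after a positive stabilization (which does not change $\xi$) I may assume the monodromy is written as a product $\phi=\tau_{c_1}^{\varepsilon_1}\cdots\tau_{c_n}^{\varepsilon_n}$ of Dehn twists with $\varepsilon_i=\pm1$ along curves $c_i\subset\Sigma$. The open book with the same page $\Sigma$ and \emph{identity} monodromy supports the standard tight contact structure on $\#_k(\mathbb{S}^1\times\mathbb{S}^2)$ with $k=2g(\Sigma)+b(\Sigma)-1$, and this contact manifold is obtained from $(\mathbb{S}^3,\xi_{st})$ by contact $(+1)$-surgery on a $k$-component Legendrian unlink of standard unknots (each contact $(+1)$-surgery on a $\mathrm{tb}=-1$ unknot is $0$-framed topological surgery, producing one $\mathbb{S}^1\times\mathbb{S}^2$ summand). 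I would then account for the monodromy one Dehn twist at a time: realizing $c_i$ as a Legendrian curve on a convex page, its contact (Thurston--Bennequin) framing agrees with the page framing, so contact $(-1)$-surgery on it effects a positive Dehn twist and contact $(+1)$-surgery a negative one. Assembling the unlink together with the Legendrian realizations of $c_1,\dots,c_n$ produces $(M,\xi)$ by contact $(\pm1)$-surgeries, and---crucially---the Giroux correspondence guarantees that the resulting contact structure is exactly $\xi$, not merely some structure on the underlying manifold $M$.

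The step I expect to be the main obstacle is precisely this last bookkeeping: proving that performing a contact $(\pm1)$-surgery on a Legendrian curve sitting on a page genuinely modifies the supported open book by the expected $\pm$ Dehn twist, with the surgery framing correctly identified with the page framing. This is the technical content that makes the open book route honest, and it is where one must be careful with orientation and framing conventions. I would isolate it as a lemma, deducing the framing identification from convex surface theory (a Legendrian curve realized on a convex surface inherits the surface framing as its contact framing) and the monodromy identification from the model picture of attaching a Weinstein $2$-handle along such a curve. As a sanity check and an alternative, one can instead start from a Lickorish--Wallace $(\pm1)$-framed link, Legendrian-approximate it, and adjust contact coefficients and stabilizations to hit the correct topological framings; that route recovers the underlying manifold $M$ readily but makes matching the \emph{prescribed} $\xi$ the hard part, whereas the open book route trades that difficulty for the framing and monodromy lemma above.
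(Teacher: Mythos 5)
The paper does not actually prove this statement: it is quoted as background, with proof attributed to Ding and Geiges \cite{DG}, so there is no in-paper argument to compare yours against. The relevant comparison is therefore with the original Ding--Geiges proof, and your proposal takes a genuinely different (and valid) route.

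Ding--Geiges argue without open books: by the cancellation lemma it suffices to pass from $(M,\xi)$ to $(\mathbb{S}^3,\xi_{st})$ by contact $(\pm1)$-surgeries; Lickorish--Wallace provides a topological surgery link in $M$, which is Legendrian-approximated so that each topological framing is realized by a nonzero rational contact coefficient, and these are converted to $(\pm1)$'s by the transformation lemma (your ``continued fraction lemma'', i.e.\ the algorithm of \cite{DGS}); finally, since the contact structure this produces on $\mathbb{S}^3$ need not be $\xi_{st}$, they invoke Eliashberg's classification --- it is either $\xi_{st}$ or overtwisted --- and dispose of the overtwisted cases by explicit surgery diagrams realizing every homotopy class of plane field. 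Your open-book route trades exactly the difficulty you flag as the hard part of the Lickorish--Wallace approach (hitting the prescribed $\xi$) for heavier machinery: Giroux's existence theorem for supporting open books, uniqueness up to isotopy of the contact structure supported by a \emph{fixed} open book, the Legendrian realization principle on convex pages, and the lemma that contact $(\mp1)$-surgery on a page curve with page framing changes the monodromy by a positive/negative Dehn twist. Note you only need those two directions of Giroux's work, not the hard stable-equivalence half of the correspondence. Three smaller points you should tighten: (i) the rational-to-$(\pm1)$ conversion piece of your plan is superfluous in the open-book route, since every surgery you perform is already a contact $(\pm1)$-surgery; (ii) for Legendrian realization you should restrict to non-separating page curves, so that each curve is non-isolating on (and disjoint from the dividing set of) the convex surface formed by doubling the page along the binding, which is what forces contact framing $=$ page framing --- this costs nothing, as Dehn twists along non-separating curves generate the mapping class group; (iii) you should justify that contact $(+1)$-surgery on the $k$-component standard Legendrian unlink yields the \emph{same} contact structure on $\#_k(\mathbb{S}^1\times\mathbb{S}^2)$ as the open book $(\Sigma,\mathrm{id})$, e.g.\ by observing both are tight and invoking Eliashberg's uniqueness of the tight structure there. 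With those points made explicit, your outline is the standard open-book proof of the Ding--Geiges theorem and is correct.
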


The above results show the existence of surgery diagrams in the smooth and contact categories, respectively. However, it is easy to notice that for a given smooth 3-manifold, there could be more than one surgery diagram representing it. 
So, it is natural to ask if there is any relation between the two surgery diagrams $L_1$ and $L_2$ corresponding to the same manifold.
Kirby answered this question affirmatively in \cite{RK} for the topological category. Kirby used two types of moves, now known as Kirby moves of type 1 and 2, to obtain $L_1$ from $L_2$, and vice versa.

In \cite{FG}, Ding and Geiges discussed the contact analogue of the Kirby move of type 2. 
However, they do not discuss the analogue of the Kirby move of type 1. 
It is not clear whether one can formulate the analogue of the Kirby move of type 1. 
In 2012, R. Avedek in \cite{RA} showed that for open books, an analogue of Kirby's theorem holds in the contact category. 
In particular, ribbon moves on open books play the role of the Kirby moves, as shown in Theorem 1.10 in \cite{RA}. Hence, one expects an analogue of the Kirby move of type 1 in terms of contact surgery diagrams. 

In this note, we explore a potential analogue of the Kirby move of type 1 in the setting of the contact surgery diagram. 
In particular, we give two necessary conditions in Observation \ref{condonKirbymove} for a contact surgery diagram to be a potential contact analogue of the Kirby move of type 1. 
We classify the candidate contact integral surgery diagrams into two collections denoted by $\mathcal{C}_1$ and $\mathcal{C}_2$. 
In Theorem \ref{MT}, we prove that any surgery diagram in $\mathcal{C}_1$ does not satisfy the necessary conditions. 
In Theorem \ref{C2Thm}, we eliminate a type of surgery diagrams from the collection $\mathcal{C}_2$. 

We would like to mention that Marc Kegel has proved the Theorems \ref{MT} and \ref{C2Thm} in his PhD thesis \cite{MK-thesis}. 
He proved these results while aiming to solve the Legendrian knot complement problem and finding cosmetic contact surgeries on $\mathbb{S}^3$. In contrast, our aim is to find out the possible analogue of the Kirby move of type 1 for contact surgery diagrams.
Although our proofs are essentially the same as given by Kegel, we are writing this article to bring clarity on possible contact Kirby move(s) of type 1. Hence, we do not claim the originality of the results presented in this article.

\section*{Acknowledgements} 
We would like to thank Marc Kegel for his comments on the earlier version of the note and for pointing out the similarities of the note with his PhD Thesis. We would also like to mention that Vera Vértesi is working on finding out a complete set of contact Kirby moves for contact Kirby calculus. The first author is supported by the Prime Minister Research Fellowship-0400216, Ministry of Education, Government of India.  
\section{Preliminaries}

Let $M$ be a smooth 3-manifold. A {\bf \it contact structure} $\xi$ on $M$ is a maximally non-integrable hyperplane field, i.e., for a locally defining 1-form $\alpha$ such that $ker(\alpha)= \xi$ satisfies $\alpha \wedge d\alpha \neq 0$. Such 1-form $\alpha$ is called a contact form. The pair $(M,\xi)$ is called a contact manifold.
On $\mathbb{S}^3 \subset \mathbb{R}^4$, a standard contact structure $\xi_{st}$ can be defined as the kernel of the contact 1-form $\alpha_{st}=\left( x_1dy_1-y_1dx_1+ x_2dy_2-y_2dx_2\right)$.

An embedded circle $K$ in a contact 3-manifold $(M, \xi)$ is called {\it Legendrian knot} if tangential to $\xi$. 
In this note, we work with Legendrian unknot $K$ in $(\mathbb{S}^3, \xi_{st})$. A Legendrian unknot is a topologically trivial Legendrian knot.
Due to Eliashberg--Fraser classification in \cite{EF}, classical invariants Thurston--Bennequin number and rotation number classify Legendrian unknots completely. 
For more details on contact manifolds, Thurston--Bennequin number and rotation number, the reader can refer to \cite{HG-Book}. 

Dehn surgery is an operation to get a new 3-manifold from a given 3-manifold. 
Dehn surgery is performed on a knot $K \subset M$. We cut a thickened neighbourhood $N(K)$ (homeomorphic to $\mathbb{S}^1\times \mathbb{D}^2$) from $M$ and glue a solid torus along the boundary torus via a homeomorphism $h: \partial(\mathbb{S}^1\times \mathbb{D}^2) \to \partial(N(K))$. 
After gluing, we get a 3-manifold, $N= \overline{M\setminus(N(K))} \cup_{h} \mathbb{S}^1\times \mathbb{D}^2$. We call $N$ a surgered manifold obtained from $M$ after performing surgery.

Dehn surgery is completely determined by the image of the meridian $m$, $h(m) = p \mu + q \lambda$ in $\partial(N(K))$, for a given choice of meridian $\mu$ and longitude $\lambda$ in $\partial(N(K))$. 
On $N(K)$, $h(m)$ is a curve that links to $K$ and defines a framing on $K$. 
In particular, Dehn surgery can be determined by {\it framed knot $K$} with framing given by a reduced fraction $\frac{p}{q}$ with $p,q \in \mathbb{Z}$.
For example, we get $\mathbb{S}^3$ after performing Dehn surgery on $\mathbb{S}^3$ along the unknot with framing $\pm1$.
By Theorem \ref{LWThm}, any closed connected oriented smooth 3-manifold $M$ can be presented by a framed link $L$ (link with framed components) in $\mathbb{S}^3$. 
Such a framed link $L$ is called a {\it surgery diagram of $M$}. 
In the surgery diagram, we denote framing on the top of the framed knot $K$.

Let framed links $L_1$ and $L_2$ correspond to the same closed oriented connected smooth 3-manifold $M$; then there is a sequence of Kirby moves of type 1 and 2 to obtain $L_2$ from $L_1$ by [\cite{RK}, Theorem 1].  
We describe the Kirby moves as follows. 
The Kirby move of type 1 is an addition (or deletion) of unknot with $\pm1$ framing to (or from) a surgery diagram (see Figure \ref{fig:FirstKM}). 

\begin{figure}[ht]
    \centering
    \includegraphics[scale=0.7]{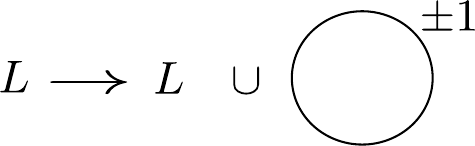}
    \caption{Kirby move of type 1}
    \label{fig:FirstKM}
\end{figure}

The Kirby move of type 2 is sliding of one component of link $L$ over another. In particular, given two components $L_1$ and $L_2$ in ${L}$ with framing $n_1$ and $n_2$ respectively, we may slide $L_1$ over $L_2$ along the longitude $L'_2$ determined by the framing. We replace $L_1 \cup L_2$ by $L'_1 \cup L_2$, where $L'_1= L_1 \#_b L'_2$ (band connected sum of $L_1$ and $L'_2$) (Figure \ref{fig:SecondKM}).  

\begin{figure}[ht]
    \centering
    \includegraphics[scale=0.7]{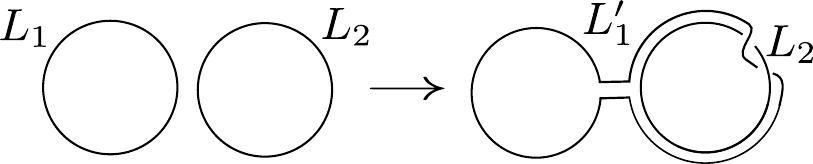}
    \caption{Kirby move of type 2}
    \label{fig:SecondKM}
\end{figure}

The notion of Dehn surgery has been extended to the contact 3-manifold $M$ in \cite{DG}.
We define Dehn surgery on Legendrian knots $K$. 
We call this surgery a {\it contact surgery} on $M$. 
Contact surgery on a contact 3-manifold yields a new contact 3-manifold. 
Moreover, contact surgery can be shown by a surgery diagram consisting of a framed Legendrian link (in its front projection). 
In contact surgery, the framing curve is determined with respect to the meridian and the {\it contact longitude}, i.e. the longitude curve induced by the contact structure on $M$.

\section{Contact Kirby move of type 1}
A reasonable expectation is that a potential contact Kirby move of type 1 would be a Kirby move of type 1 in the topological category. 
We want to explore the contact surgery diagram in $(\mathbb{S}^3, \xi_{st})$, which is \emph{topologically} equivalent to Kirby move of type 1. 

The Kirby move of type 1 consists of a surgery diagram of $(\pm1)$-framed unknot. Thus, any potential contact analogue would consist of a framed Legendrian unknot with a contact framing corresponding to  $(\pm1)$-framing for Dehn surgery. 
Therefore, we explore the relation between the contact framing on Legendrian unknots and their Dehn surgery framing $\pm1$. 
To this end, we consider the standard tubular neighbourhood $N(K)$, which is diffeomorphic to $\mathbb{S}^1\times \mathbb{D}^2$, determined by the Seifert surface of Legendrian unknot $K$ of $tb(K)=-m$. 
Recall from [Section 4.1.3, \cite{KH}], the standard tubular neighbourhood has coordinates $(z,(x,y))$ and contact 1-form $\alpha= cos(2\pi mz)dy-sin(2\pi m z)dx$ with core $K= \mathbb{S}^1 \times {0}=\{(z,(x,y)) \, |\, x=y=0\}$.
On the boundary torus $\partial N(K)$, we use given Seifert framing to identify $\partial N(K)$ with $\mathbb{R}^2/\mathbb{Z}^2$ by taking ${y=0}$ and $ {x=0}$ as the meridian $\mu$ and longitude $\lambda$ curve, respectively. So, the contact longitude $\lambda_c = \lambda-m\mu $.

Let $c$ be the contact framing $n$ curve on boundary torus $\partial N(K)$. Then $c=\lambda_c+ n\mu =\lambda+ (n-m)\mu$. Notice that $c$ is also a Dehn surgery framing $\pm1$ curve on $\partial N(K)$, i.e.  $c= \lambda \pm \mu$. Thus,
\begin{equation}\label{topcond}
    n-m= \pm1 \implies n=m\pm1.  
\end{equation}

We call Equation \ref{topcond} the {\it topological condition} for the existence of a potential contact Kirby move of type 1.

The Kirby move of type 1 does not change the underlying topological manifold. 
It is reasonable to expect its contact analogue to keep the underlying contact 3-manifold unchanged. 
In other words, adding or deleting a framed Legendrian unknot from the contact surgery diagram must not change the underlying contact structure. 
Therefore, the framed Legendrian unknot $K$ with contact framing $``n"$ must yield $(\mathbb{S}^3, \xi_{st})$ back after surgery. 
Moreover, the Bennequin inequality forces $tb(K)<0$, which implies that $0<m\in \mathbb{Z}$. By Equation \ref{topcond}, we get $0\leq n$.
We may write these observations as necessary conditions for a potential contact analogue of Kirby move of type 1.

\begin{obv}\label{condonKirbymove}
The contact surgery diagram, which could be a potential contact Kirby move of type 1, will satisfy the following necessary conditions.
\begin{enumerate}
    \item It consist of a framed Legendrian unknot $K$ with contact framing $n \in \mathbb{N}\cup \{0\}$ satisfying $n=m\pm1$, where $tb(K)=-m$.  
    \item After surgery on $K$, we get $(\mathbb{S}^3, \xi_{st})$ back. 
\end{enumerate}
\end{obv}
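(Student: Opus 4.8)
The plan is to verify the two necessary conditions independently: condition~(1) follows from a framing computation in the standard neighbourhood together with the Bennequin inequality, while condition~(2) is obtained by transporting the defining feature of the topological Kirby move of type~1 into the contact category.

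For condition~(1) I would first record the relation between the contact framing and the smooth surgery framing. In the standard tubular neighbourhood $N(K)$ of a Legendrian unknot with $tb(K)=-m$, using the Seifert framing to identify $\partial N(K)$ with $\mathbb{R}^2/\mathbb{Z}^2$, the contact longitude is $\lambda_c=\lambda-m\mu$, so a contact framing $n$ curve is $c=\lambda_c+n\mu=\lambda+(n-m)\mu$. Requiring the move to be topologically a Kirby move of type~1 forces $c=\lambda\pm\mu$, hence $n=m\pm1$, which is precisely Equation~\ref{topcond}. To constrain $n$ to $\mathbb{N}\cup\{0\}$, I would invoke the Bennequin inequality $tb(K)+|rot(K)|\leq 2g(\Sigma)-1$ for a Seifert surface $\Sigma$ of $K$; since $K$ is unknotted, $g(\Sigma)=0$, so $tb(K)\leq-1$ and $m\geq1$, whence $n=m\pm1\geq0$.

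Condition~(2) I would justify by analogy rather than computation. The topological Kirby move of type~1 is characterized by the property that a split $(\pm1)$-framed unknot may be added or deleted without altering the underlying $3$-manifold. Its contact counterpart must leave the underlying contact $3$-manifold unchanged, and in the split situation this is exactly the requirement that contact surgery on the single framed Legendrian unknot $K$ recovers $(\mathbb{S}^3,\xi_{st})$.

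The step I expect to be most delicate is fixing the framing conventions in condition~(1)---keeping careful track of the Seifert versus contact longitudes so that the sign in $c=\lambda\pm\mu$ genuinely matches the $\pm1$ Dehn framing---and stating condition~(2) honestly: in a general surgery diagram the added unknot could be linked with other components, so the precise and safe formulation is the split case, which is why we phrase it as returning $(\mathbb{S}^3,\xi_{st})$.
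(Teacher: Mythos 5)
Your proposal matches the paper's own justification essentially step for step: condition (1) via the contact-longitude computation $c=\lambda_c+n\mu=\lambda+(n-m)\mu$ forced to equal $\lambda\pm\mu$, with the Bennequin inequality for the unknot giving $m\geq 1$ and hence $n\geq 0$, and condition (2) by transporting the defining invariance property of the topological Kirby move of type 1 into the contact category. The only cosmetic difference is that you spell out the genus form of the Bennequin inequality where the paper simply cites it, so there is nothing to add.
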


We use Equation \ref{topcond} to define two collections $\mathcal{C}_1$ and $\mathcal{C}_2$ of contact surgery diagrams satisfying the above conditions. 
For $j=1,2$; we define $\mathcal{C}_j$ to be the set of contact $n$-surgery diagrams on framed Legendrian unknot $K$ with $tb(K)=-m$ satisfying above two conditions, where $n=m+(-1)^{j}$. 
Now, we state our main theorems. 

\begin{thm}\label{MT}
There is no contact surgery diagram in $\mathcal{C}_1$ satisfying the necessary conditions of potential contact analogue of Kirby move of type 1.
\end{thm}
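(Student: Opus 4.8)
The plan is to show that condition (2) of Observation \ref{condonKirbymove} fails for every diagram in $\mathcal{C}_1$, i.e. that the surgery never returns $(\mathbb{S}^3,\xi_{st})$; equivalently, $\mathcal{C}_1$ contains no genuine candidate. First I would pin down what $\mathcal{C}_1$ is: taking $j=1$ gives contact coefficient $n=m-1$, so by Equation \ref{topcond} the underlying \emph{smooth} surgery slope is $n-m=-1$. Hence the surgered manifold is always $\mathbb{S}^3$ topologically, and the only issue is \emph{which} contact structure on $\mathbb{S}^3$ we obtain. By Eliashberg's classification (see \cite{HG-Book}), $\mathbb{S}^3$ carries a unique tight contact structure, namely $\xi_{st}$, and every other contact structure on $\mathbb{S}^3$ is overtwisted. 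Therefore it suffices to prove that the contact structure produced by any diagram in $\mathcal{C}_1$ is overtwisted, since an overtwisted structure can never equal $\xi_{st}$.

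For the main argument, note that when $m\ge 2$ the coefficient $n=m-1\ge 1>0$, so these are \emph{positive} contact surgeries. Since the Legendrian unknot has maximal Thurston--Bennequin number $-1$ by \cite{EF}, any unknot $K$ with $tb(K)=-m$ and $m\ge 2$ is itself a stabilization of the maximal unknot; in particular its front contains a zig-zag. I would then invoke the standard fact that a positive contact surgery on a stabilized Legendrian knot yields an overtwisted contact manifold. Concretely, I would run the Ding--Geiges algorithm \cite{DG} converting contact $n$-surgery ($n>0$) into a chain of contact $(\pm1)$-surgeries; this chain contains a contact $(+1)$-surgery on a Legendrian pushoff of $K$, and the pushoff inherits the zig-zag of $K$. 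That $(+1)$-surgery creates an overtwisted disk, and the remaining contact $(-1)$-surgeries, being Weinstein handle attachments carried out away from that disk, cannot destroy it. Hence the result is overtwisted, so it is not $\xi_{st}$. The boundary case $m=1$ forces $n=0$, i.e. contact $0$-surgery, which is not a legitimate contact surgery in the sense of \cite{DG}; thus no such diagram lies in $\mathcal{C}_1$ and this case is vacuous.

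As a concrete cross-check I would also compute Gompf's invariant via $d_3(\xi)=\tfrac14\bigl(c^2-3\sigma(X)-2\chi(X)\bigr)+q$, where $X$ is the $4$--manifold of the $(\pm1)$-surgery presentation, $q$ counts the positive surgeries, and $c^2=\underline{\rho}^{\,T}Q^{-1}\underline{\rho}$ for the linking matrix $Q$ and the rotation-number vector $\underline{\rho}$. In the base case $m=2$ this is a single $(+1)$-surgery with $Q=(-1)$ and $rot(K)=\pm1$, giving $d_3=\tfrac12\neq-\tfrac12$ and confirming overtwistedness directly. I expect the main obstacle to be the middle step: verifying, through the Ding--Geiges reduction, that a possibly large positive coefficient $n=m-1$ genuinely reduces to a $(+1)$-surgery on a stabilized pushoff and that overtwistedness persists under the subsequent Legendrian surgeries. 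I would also emphasize that the numerical invariant alone is not decisive here, since an overtwisted structure on $\mathbb{S}^3$ could in principle share the value $-\tfrac12$; it is the overtwistedness, not the value of $d_3$, that rules out $\xi_{st}$.
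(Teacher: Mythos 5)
Your overall reduction---show that every diagram in $\mathcal{C}_1$ yields an overtwisted structure, then invoke uniqueness of the tight contact structure on $\mathbb{S}^3$---is exactly the paper's strategy, and your $d_3$ cross-check for $m=2$ is computed correctly. But the central step of your argument fails. The ``standard fact'' you invoke, that a positive contact surgery on a stabilized Legendrian knot yields an overtwisted manifold, is false in the generality you need: an integral positive contact surgery corresponds to \emph{two} contact structures, and for one of the two choices the result can be tight even when the knot is stabilized. The paper's own Remark (Lemma 5.4.4 of \cite{MK-thesis}) records precisely such an example: contact $(m+1)$-surgery on the Legendrian unknot with $tb=-m$ and $rot=|m-1|$---a knot obtained from the maximal unknot by $m-1$ stabilizations---produces $(\mathbb{S}^3,\xi_{st})$ for the presentation in the top right of Figure \ref{fig: twosuregrypresentations}. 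Your proposed justification via the Ding--Geiges--Stipsicz reduction collapses for the same reason: that tight example also converts, via \cite{DGS}, into a contact $(+1)$-surgery on the stabilized unknot $K$ followed by contact $(-1)$-surgeries on pushoffs of $K$, so your argument (``the $(+1)$-surgery creates an overtwisted disk, and the later $(-1)$-surgeries, being away from it, cannot destroy it'') would equally ``prove'' that this tight presentation is overtwisted. The flaw is the clause ``carried out away from that disk'': the subsequent surgery curves are pushoffs of $K$, they need not be (and cannot all be made) disjoint from the overtwisted disks created by the first surgery, and contact $(-1)$-surgery preserves overtwistedness only when performed in the complement of an overtwisted disk---Legendrian surgery on a non-loose knot in an overtwisted manifold can produce a tight one. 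Nothing in your write-up distinguishes $\mathcal{C}_1$ (smooth framing $-1$) from $\mathcal{C}_2$ (smooth framing $+1$), yet the claimed conclusion holds for the first and fails for one choice in the second, so the sign must enter somewhere, and in your argument it never does.

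By contrast, the paper produces an overtwisted disk directly in the final surgered manifold, with no persistence argument to justify: Lemma \ref{ChangeInTb} tracks the topological framing unknot $K_0$, realized as a Legendrian unknot with $tb(K_0)=-1$, through the surgery. Because diagrams in $\mathcal{C}_1$ are smoothly $(-1)$-surgeries, after surgery $K_0$ bounds a disk $D_0$ with respect to which its new Thurston--Bennequin invariant is $-(1-1)=0$; the disk framing then agrees with the contact framing, and $D_0$ perturbs to an overtwisted disk. It is exactly at this point that the sign is used: for $\mathcal{C}_2$ the same computation gives $tb_{new}(K_0)=-2$ and yields no conclusion, which is why the paper needs the separate rotation-number argument of Theorem \ref{C2Thm} there. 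A further, minor, discrepancy: for $m=1$ you declare contact $0$-surgery illegitimate and the case vacuous, whereas Observation \ref{condonKirbymove} explicitly allows $n=0$; the paper handles this case by noting that contact $0$-surgery is overtwisted by definition, so the case is settled rather than discarded.
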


The collection $\mathcal{C}_2$ consists of the contact $(m+1)$-surgery diagrams on Legendrian unknot with $tb(K)=-m$ satisfying necessary conditions given in Observation \ref{condonKirbymove}. 
By Theorem 5.4 in \cite{MK}, any contact surgery diagram on Legendrian knot $K$ with $tb(K)=-m$ for $(S^3, \xi_{st})$ is coarse equivalent to Legendrian unknot $K$ with $tb(K)=-m$ and $rot(K)=|m-1|$ (like unknot $K$ in the left of the Figure \ref{fig: twosuregrypresentations}). 
It is known that the coarse equivalence of knots is the same as Legendrian isotopy in $(\mathbb{S}^3, \xi_{st})$, and a contact surgery of the same kind on the Legendrian isotopic knots produces the contactomorphic $3$-manifolds. 
Therefore, the collection $\mathcal{C}_2$ reduces to the collection of contact $(m+1)$-surgery diagrams shown in the left of the Figure \ref{fig: twosuregrypresentations} for $m\in \mathbb{N}$.

In addition, we know that any positive integral contact surgery on a Legendrian knot corresponds to exactly two contact structures on the resultant manifold. Each of the contact structures can be expressed by a contact $(\pm1)$-surgery presentation obtained using an algorithm in \cite{DGS} (See Figure \ref{fig: twosuregrypresentations}). 
In the Theorem \ref{C2Thm}, we prove that the contact surgery presentation shown in the bottom right in Figure \ref{fig: twosuregrypresentations} does not produce ($\mathbb{S}^3, \xi_{st}$). 

\begin{figure}
    \centering
    \includegraphics[width=0.7\linewidth]{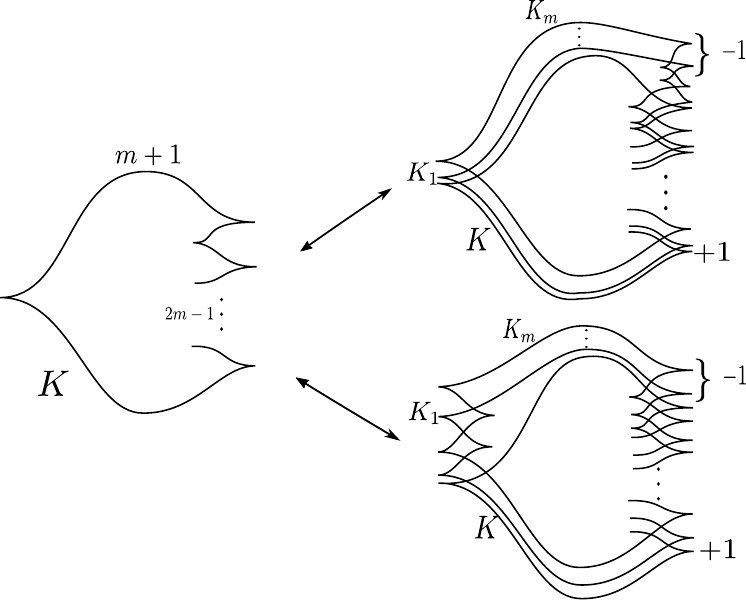}
    \caption{Contact $(\pm1)$-surgery presentations of contact $(m+1)$-surgery on Legendrin unknot $K$ with $tb(K)=-m$}
    \label{fig: twosuregrypresentations}
\end{figure}

\begin{thm}\label{C2Thm}
In $\mathcal{C}_2$, contact $(\pm1)$-surgery presentation given in left to top right in Figure \ref{fig: twosuregrypresentations} does not satisfy the necessary conditions of potential contact analogue.
\end{thm}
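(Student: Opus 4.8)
The plan is to show that the contact structure produced by the top-right $(\pm1)$-surgery diagram in Figure \ref{fig: twosuregrypresentations} is not contactomorphic to $(\mathbb{S}^3,\xi_{st})$ by computing Gompf's three-dimensional homotopy invariant $d_3$ of the underlying plane field and checking that it differs from $d_3(\mathbb{S}^3,\xi_{st})=-\tfrac12$. Since the diagram already lies in $\mathcal{C}_2$, condition (1) of Observation \ref{condonKirbymove} holds by construction, so the content of the theorem is to violate condition (2); a value $d_3\neq-\tfrac12$ does exactly this, because any contact structure contactomorphic to $\xi_{st}$ must realize $-\tfrac12$.

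First I would make the diagram explicit. Starting from the Legendrian unknot $K$ with $tb(K)=-m$ and $rot(K)=|m-1|$ carrying contact $(m+1)$-surgery (the left of the figure), I apply the Ding--Geiges--Stipsicz algorithm from \cite{DGS} to rewrite the single positive surgery as a Legendrian link $\mathbb{L}=K_0\cup\cdots\cup K_\ell$ equipped only with contact $(\pm1)$-surgeries: a single $(+1)$-surgery together with a cluster of $(-1)$-surgeries on suitably stabilized Legendrian pushoffs. The top-right picture is the output for one particular pattern of stabilizations, the bottom-right being the opposite sign choice (these are the two contact structures associated to the positive surgery). From the diagram I read off, for each component, its Thurston--Bennequin number, its rotation number (fixed by the chosen stabilizations), the sign of its surgery coefficient, and the mutual linking numbers.

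Next I record that the underlying smooth manifold is $\mathbb{S}^3$: the contact $(m+1)$-surgery has topological coefficient $tb(K)+(m+1)=1$ on an unknot, so it yields $\mathbb{S}^3$, and the DGS rewriting leaves the smooth manifold unchanged. In particular the result is an integral homology sphere, so $d_3$ is defined and the surgery formula applies. I then assemble the generalized linking matrix $M$ (diagonal entries the topological framings $tb\pm1$, off-diagonal entries the linking numbers) together with the rotation vector $\vec r$, and evaluate
\[
d_3(\xi)=\tfrac14\bigl(c^2-3\sigma-2\chi\bigr)+q,\qquad c^2=\vec r^{\,T}M^{-1}\vec r,
\]
where $\sigma$ is the signature of $M$, $\chi=1+(\ell+1)$ is the Euler characteristic of the surgery handlebody, and $q$ is the number of components carrying $(+1)$-surgery (as a check, the empty diagram returns $d_3=-\tfrac12$). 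Finding the resulting value to be different from $-\tfrac12$ shows that the top-right contact structure cannot be $(\mathbb{S}^3,\xi_{st})$, so condition (2) fails.

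The main obstacle is the bookkeeping in the two computational steps: correctly tracking the rotation numbers forced by the specific top-right stabilization pattern, and then inverting the linking matrix $M$ of the resulting cluster of unknots to obtain $c^2$ in closed form as a function of $m$. Computing $\sigma$ and the count $q$ is routine once the diagram is fixed, so the evaluation of $c^2$ is the crux. One caveat worth flagging is that an overtwisted contact structure can share the value $-\tfrac12$ with $\xi_{st}$, so the $d_3$ test yields only a \emph{sufficient} obstruction; it is adequate here precisely because the computation returns a value different from $-\tfrac12$. (The opposite sign choice is expected to return exactly $-\tfrac12$, which is consistent with the existence of genuine candidates advertised in the introduction; should the $d_3$ of the bad presentation ever coincide with $-\tfrac12$, one would instead have to invoke tightness, for instance by exhibiting an overtwisted disk created by the $(+1)$-surgery.)
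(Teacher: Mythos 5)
Your approach is sound, but it is genuinely different from the paper's. The paper never invokes the $d_3$ invariant: it proves the resulting contact structure is \emph{overtwisted} by tracking the classical invariants of the topological framing unknot $K_0$ (chosen Legendrian with $tb(K_0)=-1$, $rot(K_0)=0$, linking $K$ once) through the surgery. Lemma \ref{ChangeInTb} gives $tb_{new}(K_0)=-2$, and the rotation-number formula of [Lemma 6.6, \cite{P}] --- which involves exactly the matrix inverse $M^{-1}$ you would compute --- gives $rot_{new}(K_0)=2m-1$; for $m\geq 2$ this violates the Bennequin inequality, so the surgered $\mathbb{S}^3$ is overtwisted and, by Eliashberg's uniqueness of the tight contact structure on $\mathbb{S}^3$, cannot be $(\mathbb{S}^3,\xi_{st})$. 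Your route obstructs instead at the level of homotopy classes of plane fields, and it does go through: for the presentation in question (pushoff $K_1$ stabilized with sign opposite to the stabilizations of $K$, so $\vec r=(1-m,2-m,\dots,2-m)^T$), one finds $c^2=\vec r^{\,T}M^{-1}\vec r=4m^2-5m+1$, $\sigma(M)=1-m$, $\chi=m+2$, $q=1$, hence $d_3=m^2-m-\tfrac12$, which differs from $-\tfrac12$ precisely when $m\geq 2$; Cerf's theorem (orientation-preserving diffeomorphisms of $\mathbb{S}^3$ are isotopic to the identity) then upgrades $d_3\neq-\tfrac12$ to ``not contactomorphic to $\xi_{st}$.'' Comparing what each buys: the paper's argument yields the stronger geometric conclusion of overtwistedness and would survive even if the overtwisted structure happened to realize $d_3=-\tfrac12$, while yours is purely algebraic once the diagram is fixed (no lemmas about how $tb$ and $rot$ change in the surgered manifold) and shows the plane field is not even homotopic to the standard one. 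One correction you should make: $\mathcal{C}_2$ contains the case $m=1$, i.e.\ contact $(+2)$-surgery on the $tb=-1$ unknot (Figure \ref{fig:UniLegSur}), for which \emph{both} $(\pm1)$-presentations produce $(\mathbb{S}^3,\xi_{st})$. The paper's proof excludes this case by hypothesis ($tb(K)=-m\neq -1$); your computation detects it automatically, since $d_3=m^2-m-\tfrac12$ equals $-\tfrac12$ at $m=1$, but your write-up asserts without qualification that the computed value differs from $-\tfrac12$, so you must restrict to $m\geq 2$ for the argument to close.
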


\begin{rem}
In Lemma 5.4.4 in \cite{MK-thesis}, contact surgery presentation shown in top right region of Figure \ref{fig: twosuregrypresentations} in $\mathcal{C}_2$ produces the $(\mathbb{S}^3, \xi_{st})$. Further, the lemma proves that any contact surgery diagram of $(\mathbb{S}^3, \xi_{st})$ on a single Legendrian unknot is given by this presentation only.  
\end{rem}

Since one out of the two presentations satisfies the necessary conditions, we denote the corresponding contact surgery diagram on a single Legendrian unknot with {\it tight} written on the top. 
Thus, the collection $\mathcal{C}_2$ further reduces to a collection equal to the set of contact surgery diagrams in Figure \ref{fig:NewDiagram} for any $m\in \mathbb{N}$.  

\begin{figure}
    \centering
    \includegraphics[scale=0.5]{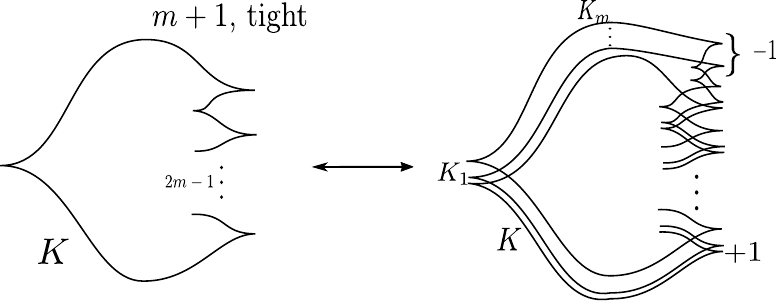}
    \caption{New contact surgery diagram that shows corresponding contact $\pm1$ surgery presentation produce $(\mathbb{S}^3, \xi_{st})$.}
    \label{fig:NewDiagram}
\end{figure}

Therefore, a potential contact analogue of Kirby move of type 1 can be {\it an addition (or deletion) of contact surgery diagram $K$ to (or from) a given integral contact surgery diagram}, where $K \in \mathcal{C}_2$.

\section{Proofs of main theorems}

In this section, we prove our main theorems. For the proofs, we need to discuss some terminologies and a lemma. 

Suppose $K$ is one of the boundary components of some embedded surface $\Sigma$ in $\mathbb{S}^3$. 
Since $\Sigma$ has more than one boundary component, $\Sigma$ is not a Seifert surface of $K$.
Let $N(K)$ denote a standard tubular neighbourhood determined by the Seifert surface of $K$. 
In $N(K)$, $\Sigma$ determines a push-off $K'$ of $K$ on $\partial N(K)$. 
We call $K'$ the framing curve induced by $\Sigma$. 
Now, we can discuss the following Lemma \ref{ChangeInTb} required for the proofs of the main results. 

The more general versions of the following lemma are proved in the literature (the reader may refer to [Lemma 6.6, \cite{P}], [Theorem 5.4, \cite{MK}]). However, we present a different approach than that of \cite{P, MK}. 

\begin{lem}\label{ChangeInTb}
    Let $K$ be a Legendrian unknot in $(\mathbb{S}^3, \xi_{st})$ with $tb(K)=-n$ denoting a contact $(n+1)$-surgery (or, contact $(n-1)$-surgery). If $K_0$ is a topological framing unknot, then Thurston--Bennequin number of $K_0$ changes by $-1$ (or, $+1$) after performing contact surgery on $K$.   
\end{lem}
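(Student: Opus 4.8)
The plan is to reduce the statement to a purely topological computation and then read off the change in the Thurston--Bennequin number as a change of linking number under a blow-down. First I would invoke the topological condition of Equation \ref{topcond}: since $tb(K)=-n$, a contact $(n+1)$-surgery on $K$ is topologically a smooth $(+1)$-surgery, while a contact $(n-1)$-surgery is topologically a smooth $(-1)$-surgery. Because $K$ is an unknot, each of these is a blow-down of a $(\pm1)$-framed unknot, so the ambient manifold is again $\mathbb{S}^3$ and the Thurston--Bennequin number of $K_0$ still makes sense after the surgery.

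Next I would reinterpret $tb(K_0)$ topologically. Recall that $tb(K_0)=\mathrm{lk}(K_0,K_0')$, where $K_0'$ is the push-off of $K_0$ in the direction of the contact framing. The surgery is supported inside $N(K)$, which is disjoint from both $K_0$ and $K_0'$; hence neither the contact structure near $K_0$ nor the curves $K_0,K_0'$ themselves are altered, and the only thing that changes is the ambient $3$-manifold, and therefore the linking pairing used to evaluate $\mathrm{lk}(K_0,K_0')$. Thus the change in $tb(K_0)$ is exactly the change in $\mathrm{lk}(K_0,K_0')$ induced by passing from $\mathbb{S}^3$ to the surgered $\mathbb{S}^3$.

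The key geometric input is that the framing unknot $K_0$ (the framing curve induced by $\Sigma$) meets a meridian disc of $K$ once, i.e.\ $\mathrm{lk}(K_0,K)=\mathrm{lk}(K_0',K)=1$. I would then apply the blow-down formula: blowing down a $(+1)$-framed unknot changes each linking number $\mathrm{lk}(A,B)$ by $-\,\mathrm{lk}(A,K)\,\mathrm{lk}(B,K)$, and blowing down a $(-1)$-framed unknot changes it by $+\,\mathrm{lk}(A,K)\,\mathrm{lk}(B,K)$. Taking $A=K_0$ and $B=K_0'$ gives a change of $\mp\,\mathrm{lk}(K_0,K)^2=\mp1$; that is, $tb(K_0)$ drops by $1$ for the contact $(n+1)$-surgery and rises by $1$ for the contact $(n-1)$-surgery, as claimed. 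Rather than quoting the blow-down formula as a black box, the ``different approach'' advertised before the statement could instead track the framing curve directly on $\partial N(K)$: record the Seifert framing of $K_0$ before surgery, reglue the solid torus according to the contact $(n\pm1)$-framing, and recompute the Seifert framing of $K_0$ in the new $\mathbb{S}^3$ using the surface $\Sigma$; the two framings differ by $\pm1$.

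The step I expect to be the main obstacle is the careful bookkeeping of orientations and framings: one must pin down conventions so that the contact $(n\pm1)$-surgery really corresponds to the smooth $(\pm1)$-surgery, extract $\mathrm{lk}(K_0,K)=1$ from the surface picture, and confirm that the contact push-off $K_0'$ is genuinely unaffected by the regluing. Once these normalizations are fixed, the rest is routine.
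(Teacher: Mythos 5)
Your proof is correct, but it takes a genuinely different route from the paper. You reduce the lemma to standard Kirby calculus: $tb(K_0)$ is the linking number of $K_0$ with its contact push-off $K_0'$, neither curve nor the contact structure near them is touched by the surgery, and the blow-down of the $(\pm1)$-framed unknot $K$ changes linking numbers by $\mp\,lk(K_0,K)\,lk(K_0',K)=\mp1$. (Your claim $lk(K_0,K)=1$ in both cases is slightly imprecise --- for the $(-1)$-framed surgery the framing curve has $lk(K_0,K)=-1$ --- but since the correction term is a product of two equal linking numbers, the sign is immaterial and the conclusion stands.) The paper instead avoids quoting any formula: it takes the Seifert annulus $S_L$ of the Hopf link $K\cup K_0$, cuts out $N(K)$, caps the resulting boundary curve $K'$ with a meridional disk of the reglued solid torus, and thereby builds an explicit new Seifert disk $D_0$ for $K_0$ in the surgered $\mathbb{S}^3$; the new Seifert longitude is isotopic to $K'=\lambda'\pm\mu'$, so the framing reference shifts by $\pm1$ and $tb_{new}(K_0)=-(t\pm1)$. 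Your concluding remark about tracking the framing curve on the boundary torus is essentially this construction, sketched but not carried out. What your approach buys is brevity and reliance only on a standard, citable fact; what the paper's approach buys is self-containedness and, importantly, a concrete geometric byproduct: the explicit disk $D_0$ is reused in the proof of Theorem \ref{MT}, where (after the surgery makes $tb_{new}(K_0)=0$) it is perturbed into an overtwisted disk. A proof of the lemma by the blow-down formula alone would not furnish that disk, so the later argument would need to be supplemented.
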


\begin{proof}
The discussion above shows that a contact $(n\pm1)$-surgery on a given Legendrian unknot $K$ is topologically a $(\pm1)-$surgery.
Thus, topological framing unknot $K_0$ has $lk(K, K_0)=\pm1$, respectively. 

We consider the Hopf link $L=K\cup K_0$. 
Let $S_L$ be a Seifert annulus of $L$. 
The Seifert annulus $S_L$ induces $\pm1$ framing on $K_0$.  
\begin{figure}[ht]
    \centering
    \includegraphics[scale=0.5]{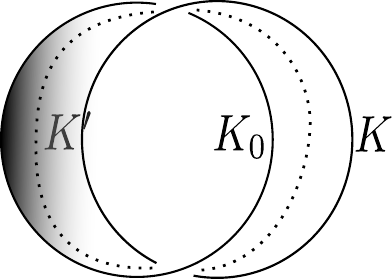}
    \caption{Shaded Seifert annulus $S_L$ with curve $K^{\prime}$ (in dotted line) and boundary curves $K \cup K_0$.}
    \label{fig:K' K_0 K}
\end{figure}

On the boundary of a small tubular neighbourhood $N(K_0)$, we denote $\lambda'$ to be a longitude curve determined by the Seifert disk of $K_0$ and closed curve $\mu'$ to be a meridian such that $<\lambda', \mu'> = H_1(\partial (N(K)))$, after fixing orientation on $K_0$. 
Let $K'= S_L \cap \partial N(K_0)$. 
Since $lk(K_0, K')=\pm1$, we can express $K'=\lambda' \pm \mu'$.

Suppose $K_0$ has $tb(K_0)=-t$ in $(\mathbb{S}^3, \xi_{st})$. 
On $\partial (N(K))$, we may express contact longitude $\lambda'_c  = \lambda'-t \mu'$ in terms of $K'$ as below. 
\begin{equation}\label{tb_new(K_0)}
    \lambda'_c  = \lambda'-t \mu'= (\lambda'\pm \mu') - (t\pm1)\mu' = K'-(t\pm1)\mu'.
\end{equation}

While performing surgery, we remove a thickened neighbourhood $N(K)$ of $K$ such that $\partial (N(K)\cap S_L)= K'$.
After removing $N(K)$, we get a new annulus $S_{L'}:= \overline{S_L \setminus \left(N(K) \cap S_{L}\right)}$ with boundary $\partial S_{L'}= K' \cup K_0$. 
Notice that $lk(K,K')=\pm1$, i.e. $K'$ is a topological framing curve. 
Thus, we attach a meridional disk $D_m$ to $K'$. 
We get an embedded disk $D_0= D_m \cup S_{L'}$ with boundary $K_0$ in the new surgered $\mathbb{S}^3$.

In the new $\mathbb{S}^3$, $D_0$ becomes a Seifert disk of $K_0$.
It determines a longitude $\lambda_{new}$ on the boundary of some thickened neighbourhood $N_{new}(K_0) \subset N(K_0)$ of $K_0$. 
Clearly, $\lambda_{new}$ is isotopic to $K'$. 
Thus, the contact longitude with respect to the $\lambda_{new}$ and $\mu'$ is $K'-(t\pm1)\mu'$ from Equation \ref{tb_new(K_0)}. 
Hence $tb_{new}(K_0)=-(t\pm1)$ in the new $\mathbb{S}^3$. 
\end{proof}

In the proof of Theorem \ref{MT}, we show the existence of an overtwisted disk in the new $\mathbb{S}^3$ obtained after surgery on any surgery diagram in $\mathcal{C}_1$. Recall that an embedded disk $D$ in a contact 3-manifold $(M,\xi)$ is an {\it overtwisted disk},  if it has Legendrian boundary whose surface framing coincides with the contact framing, and in the interior, there is a point with coinciding contact plane and tangent plane to the disk. 

A contact structure $\xi$ on a contact 3-manifold $M$ is called {\it overtwisted} if it contains an overtwisted disk. 
A contact structure $\xi$ is {\it tight} if it is not an overtwisted contact structure. 
For example, a standard contact structure $\xi_{st}$ on $\mathbb{S}^3$ is a tight contact structure on $\mathbb{S}^3$. 
In addition, Eliashberg in \cite{Eli-OverCon} proved that $\mathbb{S}^3$ has a unique tight contact structure up to isotopy, and it is given by $\xi_{st}$.

\begin{proof}[Proof of Theorem \ref{MT}]
Suppose $K \in \mathcal{C}_1$ with contact framing $n=m-1$, where $tb(K)=-m$. If $m=1$, it implies a contact 0-surgery on $K$. Contact 0-surgery yields an overtwisted contact structure on the surgered manifold by definition.  

Now we assume $m>1$. 
Suppose that Legendrian unknot $K$ with $tb(K)=-m$ denotes a contact $(m-1)$-surgery diagram. 
It corresponds to topological $(-1)$-surgery on $K$. Suppose $K_0$ is a topological framing unknot such that it is also a Legendrian unknot with $tb(K_0)=-1$. 

In the proof of Lemma \ref{ChangeInTb}, after surgery, we obtain a new Seifert disk $D_0$ such that $\partial D_0=K_0$ and $tb_{new}(K_0)=-(1-1)=0$. 
In particular, the contact framing of $K_0$ coincides with the surface framing induced by the new Seifert disk $D_0$.

We may perturb $D_0$ in the interior such that the contact plane coincides with the tangent plane at some point. 
Thus, $D_0$ becomes an overtwised disk in the new surgered ${\mathbb{S}}^3$. Hence, the new ${\mathbb{S}}^3$ has an overtwisted contact structure.   
\end{proof}

\begin{figure}[ht]
    \centering
    \includegraphics[scale=0.5]{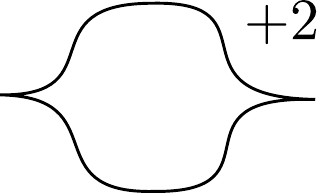}
    \caption{A unique contact integral surgery diagram on unknot with both contact $(\pm1)$-surgery presentation corresponding to  $(\mathbb{S}^3,\xi_{st})$ (refer Example 5.3, \cite{MK}).}
    \label{fig:UniLegSur}
\end{figure}

In the proof of Theorem \ref{C2Thm}, we show that there exists a Legendrian unknot $K$ in the new surgered ${\mathbb{S}}^3$ violating the Bennequin inequality for presentation shown in Figure \ref{fig: twosuregrypresentations} from left to bottom right of every diagram in $\mathcal{C}_2$ except contact $(+2)$-surgery diagram shown in Figure \ref{fig:UniLegSur}. 
For any Legendrian unknot $K$ in $(S^3, \xi_{st})$, the inequality $tb(K)+ |rot(K)| \leq -1$ is known as Bennequin inequality.

\begin{proof}[{Proof of Theorem \ref{C2Thm}}]
Suppose unknot $K$, with $tb(K)=-m \neq -1$, is a contact surgery diagram in $\mathcal{C}_2$. The topological surgery framing on $K$ is $+1$, and the contact framing on $K$ is  $m+1$. 
We draw the topological surgery framing unknot $K_0$ such that it is also a Legendrian unknot $K_0$ of $tb(K_0)=-1$.
Also, note that Bennequin inequality forces $rot(K_0)=0$.
After contact surgery on $K$, we obtain $\mathbb{S}^3$. In the new $\mathbb{S}^3$, the new Thurston--Bennequin number $tb_{new}(K_0)= -(1+1)=-2$ by Lemma \ref{ChangeInTb}. 

We use Lemma 6.6 from \cite{P} to calculate the rotation number of $K_0$ in new $\mathbb{S}^3$. 



Notice that the topological surgery framing unknot $K_0$ has $lk(K, K_0)=+1$ because topological framing push-off takes the same orientation as $K$. 

We start with converting contact $(m+1)$-surgery on unknot $K$ to a sequence of contact $(\pm1)$-surgeries on link $K \cup K_1\cup K_2\cup \ldots\cup K_m$ using an algorithm described in \cite{DGS}. 
In the sequence $K, K_1, K_2, \ldots, K_m$, we perform contact $(+1)$-surgery on $K$ and $(-1)$-surgeries on the rest. 
Since $\frac{m+1}{-m}$ has a continued fraction expansion $[-3, -2, \ldots, -2]$ satisfying the condition given in the algorithm, we add a zigzag to a push-off of $K$ as Figure \ref{fig:K cup K_1} to get $K_1$. 
And $K_i$ is a push-off of $K_{i-1}$, for $2 \leq i \leq m$.
\begin{figure}[ht]
\begin{subfigure}{.4\textwidth}
  \centering
  \includegraphics[scale=0.5]{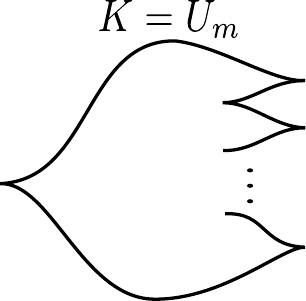}
  \caption{$K=U_m$}
  \label{fig:Um}
\end{subfigure}
\begin{subfigure}{.5\textwidth}
  \centering
  \includegraphics[scale=0.5]{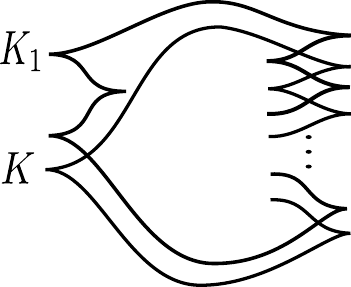}
  \caption{$K\cup K_1$}
  \label{fig:K cup K_1}
\end{subfigure}
\caption{Knot $K$ and its push-off with a zigzag added on the left.}
\end{figure}

We write the matrix $M$ using the formula given in Lemma 6.6 in \cite{P}. 
The entries of $M$ are given by the following 
$$a_{i j}= \begin{cases}
tb(K)+1= -m+1, & \text { for } i=j=1.\\ 
tb(K_{i -1})-1= -m-2, & \text { for } 2 \leq i=j \leq m+1.\\ 
lk(K, K_{j-1})= tb(K)=-m, & \text { for } i=1, 2 \leq j \leq m+1.\\
lk(K, K_{i-1})= tb(K)=-m, & \text { for } j=1, 2 \leq i \leq m+1.\\ 
lk\left(K_{i-1}, K_{j-1}\right)=tb(K_{i-1})=-m-1, & \text { for }  i\neq j \text{ and } 2 \leq i, j \leq m+1.
\end{cases}$$
Thus,
$$M =\left(\begin{array}{ccccc}
-m+1 & -m & -m &  \cdots & -m \\
-m & -m-2 & -m-1 & \cdots & -m-1 \\
\vdots & \vdots & \vdots&  &\vdots \\
-m & -m-1 & -m-1& \cdots  &-m-2
\end{array}\right)_{(m+1) \times(m+1)}.$$
We calculate the inverse 
$$M^{-1} =\left(\begin{array}{ccccc}
m^2 + m +1 & -m &  -m &\cdots & -m \\
-m & 0  & 1 & \cdots & 1 \\
-m & 1 & 0 & \cdots & 1\\
\vdots & \vdots & \vdots & & \vdots \\
-m & 1 &  1 & \cdots  & 0
\end{array}\right)_{(m+1) \times(m+1)}.$$

Since $K_1$ is a push-off of $K$ and $K_i$ is a push-off of $K_{i-1} $, for $2\leq i\leq m$, the linking number $lk(K_0, K_j)= +1$ for $1\leq j \leq m$. 
Therefore, we get linking number column $L= \begin{pmatrix}
    +1 &+1 & \cdots &+1
\end{pmatrix}^T.$ 

Thus, $M^{-1}(L) =
\begin{pmatrix}
    m+1 & -1 & \cdots & -1
\end{pmatrix}^T.$

The rotation numbers of $K_i$ are the same as $K_1$ because they are push-offs of $K_{i-1}$, for $2\leq i\leq m$.
Since $K_1$ is a push off of $K$ with an added zigzag as shown in Figure \ref{fig:K cup K_1}, $rot(K_1)= rot(K)+1= -m+2$. 
Thus, the rotation number column $C =\begin{pmatrix} -m+1 & -m+2 & \cdots & -m+2\end{pmatrix}^T$. Now, we put the above vectors into the formula below. 
\begin{eqnarray*}
    rot_{new}(K_0) &=& rot(K_0) - \left<C, M^{-1}(L)\right>\\
     &=& -\left<C, M^{-1}(L)\right> \quad(\because rot(K_0)=0).\\
    &=& -\{(1-m)(1+m) - m(-m+2)\}\\
    &=&-(1-m^2+m^2-2m) \\
    &=& 2m-1.
\end{eqnarray*}


Clearly, $|2m-1|>2$ for $m\geq2$. Thus, 
$K_0$ does not satisfy the Bennequin inequality because $-2 + |1-2m|>-1 $. Hence, the new $\mathbb{S}^3$ does not have a tight contact structure. 
\end{proof}



\section*{Conclusion}

In \cite{RA}, Avdeck shows that ribbon moves are sufficient to relate contact surgery diagrams corresponding to the same contact 3-manifold. 
Therefore, asking whether contact surgery diagram in $\mathcal{C}_2$ can be realised as a ribbon move is natural. 
We observe that the potential contact Kirby move of type 1 cannot be converted into a ribbon move as follows. 
Ribbon moves are defined in \cite{RA} as relations between mapping classes. 
We prove it for the contact $(+2)$-surgery diagram on unknot and the general case will follow similarly. 
First, we convert contact $(+2)$-surgery into a $(\pm1)$-surgery link $L$ using an algorithm given in \cite{DGS}. We construct a Legendrian graph $\overline{L}$ of surgery link $L$ and then the ribbon surface $R_{\overline{L}}$ of $\overline{L}$ following Algorithm 2 from \cite{RA}. 
\begin{figure}[ht]
    \centering
    \includegraphics[scale=0.7]{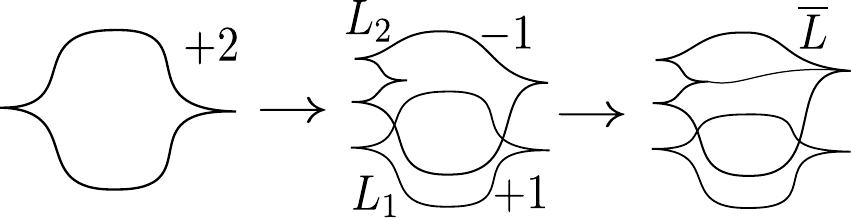}
    \caption{contact $(+2)$-surgery in a sequence of $(\pm1)-$surgery link $L$ and Legendrian graph $\overline{L}$ of $L$.}
    \label{fig:LegGraph}
\end{figure}

In $R_{\overline{L}}$, the surgery link $L$ denotes a mapping class $D_{L_1}^{-1} \circ D_{L_2}^{+1} \in MCG(R_{\overline{L}}, \partial R_{\overline{L}})$. 
But $\psi^{-1}\circ D_{L_1}^{-1} \circ D_{L_2}^{+1} \circ \psi \neq Id_{R_{\overline{L}}}$ for any mapping class $\psi \in MCG(R_{\overline{L}}, \partial R_{\overline{L}})$. 
Hence, the addition or deletion of contact $(+2)$-surgery unknot does not describe a ribbon move.
We can generalise this argument for any contact surgery diagram in $\mathcal{C}_2$. Thus, an addition or a deletion of contact surgery diagram $K \in \mathcal{C}_2$ to or from a given integral contact surgery diagram does not describe a ribbon move. 
Therefore, it remains unclear whether the potential candidates in $\mathcal{C}_2$ play the role of a contact Kirby move of type 1.

\bibliographystyle{plain}
\bibliography{Reference}

\end{document}